\newtheorem{theorem}{Theorem}[section]
\newtheorem{lem}[theorem]{Lemma}
\newtheorem{rem}[theorem]{Remark}
\author{Taboka Prince Chalebgwa \vspace{0.4cm} \\ \textit{Department of Mathematical Sciences},\\ \textit{Mathematics Division, Stellenbosch University,}\\ \textit{Private Bag X1, 7602 Matieland, South Africa} \\ \textit{Email: taboka@aims.ac.za }}
\title{Algebraic values of certain analytic functions defined by a canonical product}
\date{}
\begin{document}
\maketitle

\begin{abstract}
\noindent We give a partial answer to a question attributed to Chris Miller on algebraic values of certain transcendental functions of order less than one. We obtain $C(\log H)^\eta$ bounds for the number of algebraic points of height at most $H$ on certain subsets of the graphs of such functions. The constant $C$ and exponent $\eta$ depend on certain data associated with the functions and can be effectively computed from them.
\end{abstract}

\noindent \textit{Mathematics Subject Classification (2010):} 11J99, 30D20.\\

\noindent \textit{Keywords and phrases:} Algebraic points, bounded height, transcendental functions.

\section{Introduction}

The current work falls within the general theme of studying the asymptotic density (in terms of height) of algebraic values of bounded height and degree on graphs of transcendental functions. Given $H$ and $d$, a height and a degree bound respectively, a trivial upper bound for the aforementioned density takes the form $C(d) H^{2d}$, which follows immediately from quantitative versions of Northcott's theorem. As such, \textit{polylogarithmic} bounds in $H$ are considered very good, and, for a given transcendental function, often nontrivial to prove.\\

\noindent For the reader's convenience, we begin this section with a brief reminder of the definition of the \textit{absolute multiplicative height} of an algebraic number, which is the height notion we will be using throughout the paper. After this, in order to place our main result within the context of what is known in the general literature, we shall briefly discuss a few related results.\\

\noindent Let $P(z) \in \mathbb{C}[z]$ be a polynomial with complex coefficients. Writing $P(z)$ as

\begin{equation*}
P(z) = a \prod_{j = 1}^n (z - \alpha_j),
\end{equation*}

\noindent the \textit{Mahler measure} $\mathcal{M}(P)$ of the polynomial $P$ is the quantity

\begin{equation*}
\mathcal{M}(P) = |a| \prod_{j = 1}^n \max \{ 1, |\alpha_j| \}.
\end{equation*}

\noindent If $\alpha$ an algebraic number of degree $d$, the \textit{logarithmic} height of $\alpha$, $h (\alpha)$ is defined to be:

\begin{equation*}
h(\alpha) = \frac{\log \mathcal{M} (\alpha)}{d},
\end{equation*}

\noindent where $\mathcal{M}(\alpha)$ is the Mahler measure of the minimal polynomial of $\alpha$ over $\mathbb{Z}$.\\

\noindent The \textit{absolute multiplicative height} of $\alpha$, $H(\alpha)$ is defined as:

\begin{equation*}
H(\alpha) = \exp \left \{  \frac{\log \mathcal{M} (\alpha)}{d}   \right \} = \mathcal{M}(\alpha)^\frac{1}{d}.
\end{equation*}

\noindent If $\alpha$ and $\beta$ are algebraic numbers, we use the notation $H(\alpha, \beta)$ to represent the quantity

\begin{equation*}
\max \{ H(\alpha), H(\beta) \}.
\end{equation*}

\subsection{Some known results}

\noindent In \cite{DaMa}, Masser proves the following result for the number of rational points on the graph of the Riemann $\zeta$-function restricted to the interval $(2,3)$.

\begin{theorem}(Masser, \cite{DaMa})\\
Let $\zeta $ be the restriction of the Riemann $\zeta$-function to the interval $(2,3)$. There is an effective constant $c > 0$ such that for all $H \geq e^e$, the number of rational points of height at most $H$ on the graph of $\zeta $ is at most

\begin{equation*}
c \left(  \frac{\log H}{\log \log H}  \right)^2.
\end{equation*}
\end{theorem}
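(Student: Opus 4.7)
The plan is to apply the Bombieri--Pila interpolation determinant method, tailored to the analytic structure of $\zeta$ on the real interval $(2,3)$. Suppose $(x_1, \zeta(x_1)), \ldots, (x_N, \zeta(x_N))$ are rational points of height at most $H$ on the graph; write $x_i = p_i/q_i$ and $\zeta(x_i) = a_i/b_i$ with $|p_i|, q_i, |a_i|, b_i \leq H$. A pigeonhole argument confines $D$ of the $x_i$ to a subinterval $I \subset (2,3)$ of length $\delta$, where $D$ and $\delta$ are parameters to be optimised later.

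Next, form the $D \times D$ interpolation determinant $\Delta = \det(\phi_\sigma(x_i))_{i,\sigma}$ with $\phi_\sigma$ ranging over monomials $x^a \zeta(x)^b$ of weighted degree at most $T \sim \sqrt{D}$. The upper bound comes from Taylor expanding each column about the midpoint of $I$: multilinearity of the determinant annihilates the lowest $\binom{D}{2}$ orders, leaving $|\Delta| \leq \delta^{\binom{D}{2}} B^{D}$, where $B$ controls the higher derivatives of the $\phi_\sigma$ on $I$. The crucial input is that on $(2,3)$ the Dirichlet series $\zeta(s) = \sum_{n \geq 1} n^{-s}$ converges rapidly and yields $|\zeta^{(k)}(x)| \leq \sum_{n \geq 1} (\log n)^k / n^2 \ll C^{k} k!$, which is precisely the factorial-type regularity that will deliver the $\log\log H$ saving.

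For the lower bound, each entry $x_i^{a}\zeta(x_i)^{b}$ is rational with denominator at most $(q_i b_i)^{T} \leq H^{O(T)}$, so a nonzero $\Delta$ satisfies $|\Delta| \geq H^{-cDT}$. Balancing the two bounds forces $\Delta = 0$ once $\delta$ is chosen suitably small relative to $H$; a vanishing determinant produces a nontrivial polynomial $P(X,Y) \in \mathbb{Z}[X,Y]$ with $P(x_i, \zeta(x_i)) = 0$ for the $D$ chosen points. Since $\zeta(s)$ is transcendental over $\mathbb{Q}(s)$ (it has a simple pole at $s=1$ and admits no algebraic relation), the number of such $x_i$ in $I$ is bounded in terms of $\deg P$; covering $(2,3)$ by $\delta^{-1}$ intervals of length $\delta$ and summing then bounds $N$.

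The principal obstacle is obtaining the exponent $(\log H/\log\log H)^2$ rather than the generic $(\log H)^2$ that a direct Pila--Wilkie argument would produce. This requires choosing $D \sim \log H/\log\log H$ and using the factorial bound on $\zeta^{(k)}$ to push the Taylor expansion to an order comparable to $D$ before the remainder dominates; the $\log\log H$ improvement then emerges from careful optimisation of the three parameters $D$, $T$ and $\delta$ against the refined analytic estimate. A minor subsidiary point is verifying that the constants involved are effective, which follows from the explicit derivative bound above and the explicit form of the Dirichlet series.
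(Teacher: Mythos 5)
This theorem is quoted from Masser's paper \cite{DaMa}; the present paper gives no proof of it, but its own method (Lemma \ref{ml2} plus a Jensen-type zero count) mirrors Masser's, so that is the right benchmark. Measured against it, your proposal has a genuine gap at the decisive step. After you force the interpolation determinant to vanish and extract a nonzero $P(X,Y)\in\mathbb{Z}[X,Y]$ with $P(x_i,\zeta(x_i))=0$, you assert that ``since $\zeta(s)$ is transcendental over $\mathbb{Q}(s)$ \ldots the number of such $x_i$ in $I$ is bounded in terms of $\deg P$.'' Transcendence only tells you that $x\mapsto P(x,\zeta(x))$ is not identically zero, hence has finitely many zeros on a compact interval; it gives no bound \emph{uniform over all} $P$ of degree at most $T$, and such a uniform quantitative zero estimate is precisely the analytic heart of Masser's proof. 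He obtains it not from transcendence but from the specific analytic properties of $\zeta$: the function $(s-1)\zeta(s)$ continues to an entire function of controlled growth, so one can bound the zeros of $P(z,\zeta(z))$ in a disc containing the interval via Jensen's formula (equivalently, via approximation of $\zeta$ by short Dirichlet polynomials), together with a point where $|P(z,\zeta(z))|$ is not too small. Without this input your argument yields no bound at all, let alone $c(\log H/\log\log H)^2$.

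Two further points. First, your bookkeeping with the subdivision is not innocuous: to force $\Delta=0$ with $T$ as small as $\log H/\log\log H$ you need $\delta$ only polylogarithmically small (this is where the saving actually comes from, exactly as in Lemma \ref{ml2}, where the gain is $T\log(AZ)\gtrsim\log H$ with $AZ$ a power of $\log H$); if instead $\delta$ must be a negative power of $H$, the $\delta^{-1}$ intervals in your final covering step destroy any polylogarithmic bound. Second, the factorial bound $|\zeta^{(k)}(x)|\ll C^k k!$ is just analyticity on a neighbourhood of $(2,3)$ and, by itself, leads to bounds of the shape $(\log H)^2$ (as in Surroca's theorem quoted above); the $\log\log H$ improvement is not produced by pushing the Taylor expansion further but by the global growth/approximation properties of $\zeta$ entering the zero estimate. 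So the architecture (auxiliary polynomial, then count zeros) is the right one, but the proposal is missing the quantitative zero estimate that makes it work and misattributes the source of the $\log\log H$ saving.
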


\noindent In \cite{EtBe1}, adapting Masser's method, Besson studied the density of algebraic points of bounded degree and height on the graph of the $\Gamma$-function restricted to the interval $[n-1,n]$. He obtains the following:

\begin{theorem}(Besson, \cite{EtBe1})\\
There exists a positive effective constant $c$ such that for integers $d \geq 1$, $H \geq 3$ and $n \geq 2$, the number of algebraic points of degree at most $d$ and height at most $H$ on the graph of the $\Gamma$-function restricted to the interval $[n-1,n]$ is at most

\begin{equation*}
c (n^2 \log (n))  \left(  \frac{(d^2 \log H)^2}{\log (d \log H)} \right).
\end{equation*}
\end{theorem}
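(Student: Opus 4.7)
The plan is to follow Masser's determinant method (as in the proof of Theorem 1.1), with the functional equation of $\Gamma$ used to reduce the problem on $[n-1,n]$ to one on a fixed compact interval. Iterating $\Gamma(x+1) = x\Gamma(x)$ yields
\begin{equation*}
\Gamma(x) \;=\; (x-1)(x-2)\cdots(x-n+1)\,\Gamma(x-n+1),
\end{equation*}
so every algebraic point $(\alpha,\Gamma(\alpha))$ of degree $\le d$ and height $\le H$ on the graph over $[n-1,n]$ corresponds to an algebraic point $(\beta,\Gamma(\beta))$ with $\beta = \alpha - n + 1 \in [0,1]$, of the same degree and of height $H'$ explicitly bounded in terms of $n,d,H$.

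On the fixed interval I would apply a Masser-type interpolation determinant. Assuming $N$ such algebraic points $(\beta_i,\Gamma(\beta_i))$ with $\beta_i \in [0,1]$, choose integers $A,B$ with $AB=N$ and form the $N\times N$ matrix with entries $\beta_i^{a}\Gamma(\beta_i)^{b}$, $0\le a<A$, $0\le b<B$. Two bounds on $\Delta:=\det(\cdot)$ are then needed. An \emph{analytic upper bound} comes from viewing $\Delta$ as the value of an entire function with many forced vanishings (Hermite-type interpolation), combined with growth and derivative estimates for $\Gamma$ on a complex neighbourhood of $[0,1]$, yielding $\log|\Delta|\le -c_1 N^2 + c_2 N(A+B)$. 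An \emph{arithmetic lower bound} follows from Liouville's inequality: if $\Delta\neq 0$, then $\log|\Delta|\ge -c_3\, d^{2}N\log H'$.

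Comparing the two bounds forces $\Delta=0$ once $N$ exceeds the optimal order $(d^{2}\log H')^{2}/\log(d\log H')$; since $\Gamma$ is transcendental over $\mathbb{C}(x)$, the vanishing of all such determinants contradicts the presence of more than $N$ algebraic points. Substituting the reduction estimate $\log H' = O(\log H + n\log n)$ then produces a bound of the claimed shape. The principal obstacle is extracting the correct $n$-dependence: a careless application of the functional equation inflates the height by $H^{O(n)}$, which would propagate into the final count. Controlling this requires either running the determinant analysis directly on $[n-1,n]$ (using $|\Gamma(x)|\asymp (n-1)!$ and $|\Gamma'/\Gamma|(x)\asymp \log n$ there), or, more elegantly, factoring the polynomial $p_n(x)=(x-1)\cdots(x-n+1)$ out of the determinant so that the transcendence content is still drawn from $\Gamma$ on $[0,1]$ while $p_n$ contributes only a polynomial-in-$n$ overhead. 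Verifying that this dichotomy yields precisely the $n^{2}\log n$ factor, rather than a worse dependence, is the delicate bookkeeping at the heart of the argument.
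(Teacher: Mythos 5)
First, note that the paper does not prove this statement at all: it is quoted as background from Besson's article \cite{EtBe1}, so there is no internal proof to compare against; your sketch has to be judged on its own merits, and it has two genuine gaps.

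The first is the pivotal step ``comparing the two bounds forces $\Delta=0$ \dots\ since $\Gamma$ is transcendental over $\mathbb{C}(x)$, the vanishing of all such determinants contradicts the presence of more than $N$ algebraic points.'' Vanishing of the interpolation determinants (equivalently, the existence of the auxiliary polynomial, which is how Masser and Besson actually phrase it, cf.\ Lemma \ref{ml2}) only yields a nonzero $P(X,Y)$ of controlled degree and coefficient size with $P(\beta_i,\Gamma(\beta_i))=0$ at all the points. Transcendence of $\Gamma$ guarantees that $x\mapsto P(x,\Gamma(x))$ is not identically zero, hence has finitely many zeros, but it gives no bound whatsoever on their number. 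The quantitative heart of the method is the subsequent zero estimate: one must produce a point where $|P(x,\Gamma(x))|$ is not small and then apply Jensen's formula (the ``corollary of Jensen's formula'' used in this paper's own main theorem) together with explicit upper bounds for $\max_{|z|\le r}|P(z,\Gamma(z))|$ on a complex neighbourhood of $[n-1,n]$. It is exactly this step that produces both the factor $n^2\log n$ (from $\log M(r,\Gamma)\asymp n\log n$ on such a neighbourhood) and the factor $(d^2\log H)^2/\log(d\log H)$ (from optimizing the degree $T$ of the auxiliary polynomial); your sketch omits it entirely. The second gap is the reduction to $[0,1]$: your claimed estimate $\log H'=O(\log H+n\log n)$ is wrong as stated. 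Writing $\Gamma(\beta)=\Gamma(\alpha)/\bigl((\alpha-1)\cdots(\alpha-n+1)\bigr)$, the standard height inequalities give $H(\Gamma(\beta))\le H\cdot\prod_{j=1}^{n-1}2jH$, i.e.\ $\log H'=O\bigl(n(\log H+\log n)\bigr)$; feeding this into $(d^2\log H')^2$ inflates the count by an extra factor of order $n^2$ beyond the target, and you do not carry out either of the two remedies you mention (working directly on $[n-1,n]$, or factoring out $p_n$), conceding yourself that this ``delicate bookkeeping'' is unverified. As it stands, the proposal is a plausible plan in the spirit of Masser's method, but the two steps that constitute the actual content of Besson's theorem are missing.
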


\noindent In \cite{AnSu}, assuming only that $f$ is complex analytic and transcendental, Surroca achieves the rather exciting bound of $C d^3 (\log H)^2$ for the number of algebraic points of degree at most $d$ and height at most $H$ on the restriction to a compact subset of the graph of $f$. However, the bound is valid only for infinitely many $H$. More precisely:

\begin{theorem}(Surroca, \cite{AnSu})\\
Let $\mathcal{U} \subset \mathbb{C}$ be open and connected. Let $\mathcal{K}$ be a compact subset of $\mathcal{U}$. Let $f$ be a transcendental complex analytic function on $\mathcal{U}$. Then for any integer $d \geq 1$, there exists a positive real number $C > 0$ such that there are infinitely many real numbers $H \geq 1$ such that the number of algebraic points of degree at most $d$ and height at most $H$, with the input belonging to $\mathcal{K}$, is at most

\begin{equation*}
C d^3 (\log H)^2.
\end{equation*}

\end{theorem}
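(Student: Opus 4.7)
My plan is to apply the classical transcendence method: Siegel's lemma to produce an auxiliary polynomial vanishing at many of the algebraic points, a Schwarz-type upper bound exploiting the resulting zeros, and a Liouville-type lower bound on the value at a further algebraic point. Throughout I suppose there are $N$ algebraic points $(\alpha_1, f(\alpha_1)), \ldots, (\alpha_N, f(\alpha_N))$ on the restricted graph with $\alpha_i \in \mathcal{K}$, each of degree $\leq d$ and height $\leq H$. Since $\mathcal{K}$ is compact in the open set $\mathcal{U}$, I fix concentric disks $D(z_0, r) \supset \mathcal{K}$ and $D(z_0, R) \subset \mathcal{U}$ with $R > 2r$, and set $\rho := 2r/R < 1$.

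First I would seek a nonzero $P \in \mathbb{Z}[X, Y]$ of bidegree $(L, L)$, with $L$ to be optimised, satisfying $P(\alpha_i, f(\alpha_i)) = 0$ for $i = 1, \ldots, \lfloor N/2 \rfloor$. Each such vanishing is one equation in the number field $\mathbb{Q}(\alpha_i, f(\alpha_i))$, of degree $\leq d$ over $\mathbb{Q}$, hence contributes $d$ linear equations over $\mathbb{Q}$ on the $(L+1)^2$ coefficients of $P$; the matrix entries are bounded by $H^{C_0 L}$. Choosing $L$ with $(L+1)^2 \asymp dN$, Siegel's lemma (in the Bombieri--Vaaler form) supplies a nonzero $P$ with $\log \|P\|_\infty \leq C_1 L \log H$, the constants depending only on $\mathcal{K}$ and $\sup_{D(z_0, R)} |f|$. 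Because $f$ is transcendental and $P \neq 0$, the analytic function $F(z) := P(z, f(z))$ on $\mathcal{U}$ is not identically zero.

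Next I would close the argument by combining Schwarz and Liouville. Since $F$ vanishes at $\alpha_1, \ldots, \alpha_{\lfloor N/2 \rfloor} \in D(z_0, r)$, factoring out these zeros and applying the maximum principle on $D(z_0, R)$ yields, for every $z \in D(z_0, r)$,
$$|F(z)| \leq \|F\|_{D(z_0, R)} \cdot \rho^{N/2} \leq C_2^L H^{C_1 L} \rho^{N/2}.$$
If there is an index $j > N/2$ with $F(\alpha_j) \neq 0$, then $F(\alpha_j)$ is a nonzero algebraic number of degree $\leq d$, and standard height inequalities for polynomial evaluation bound its multiplicative height by $C_3^L H^{C_4 L}$. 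The Liouville inequality then gives $|F(\alpha_j)| \geq C_3^{-dL} H^{-C_4 dL}$. Combining the two estimates and taking logarithms, $(N/2)\log(1/\rho) \leq C_5 d L \log H + O(L)$, and substituting the optimal choice $L = O(\sqrt{dN})$ produces $N \leq C d^3 (\log H)^2$, the desired bound.

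The main obstacle --- and the reason the conclusion holds only for infinitely many $H$ rather than for all $H$ --- is securing an index $j > N/2$ with $F(\alpha_j) \neq 0$. A priori the Siegel polynomial $P$ could vanish at all $N$ given algebraic points, not merely the prescribed half, in which case the Liouville step is unavailable. One navigates this by exploiting the freedom in the parameter $L$: varying $L$ over an appropriate range forces the total-vanishing scenario to fail for a cofinal set of $H$, and it is precisely for those $H$ that the bound is obtained. Making this parameter dance precise, while keeping the constant $C$ effective and independent of $H$, is the technically delicate step of the proof.
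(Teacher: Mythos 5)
This theorem is quoted in the paper from Surroca's article \cite{AnSu} for context only; the paper contains no proof of it, so your attempt has to be measured against Surroca's original argument. Your quantitative skeleton (Siegel's lemma with $(L+1)^2 \asymp dN$ unknowns, a Schwarz bound $\rho^{N/2}$ from the prescribed zeros, a Liouville lower bound of size $H^{-CdL}$ at a nonvanishing point, and the arithmetic $N \leq C d^3(\log H)^2$ after optimising $L$) is the right shape and the exponents do come out correctly. But the step you yourself flag as delicate is a genuine gap, and the remedy you propose does not work as stated: there is no argument showing that ``varying $L$ over an appropriate range'' forces $F(\alpha_j) \neq 0$ at one of the given points for a cofinal set of $H$. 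The Siegel polynomial is only controlled through its height and its prescribed vanishing; nothing prevents it from vanishing at all $N$ points for every admissible $L$, and without a nonvanishing value the Liouville step, and hence the whole bound, evaporates. As written, the proof establishes nothing, not even for infinitely many $H$.

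The actual mechanism in Surroca's proof (and in the related arguments of Masser and Boxall--Jones that this paper adapts) is different and is precisely where the transcendence of $f$ and the ``infinitely many $H$'' enter. One argues by contradiction: assume the bound fails for all sufficiently large $H$. Then, constructing the auxiliary polynomial from the points of height at most $H$, one does not look for a given point where $F \neq 0$; instead one shows that at every algebraic point of the graph with input in $\mathcal{K}$ and slightly larger height, the Schwarz upper bound on $|F|$ (small because of the many prescribed zeros) is strictly smaller than the Liouville lower bound that a nonzero value would have to satisfy, so $F$ is forced to vanish there too. Iterating over increasing height ranges, $F(z) = P(z, f(z))$ vanishes on an infinite subset of the compact set $\mathcal{K}$, hence has an accumulation point of zeros, so $F \equiv 0$ on $\mathcal{U}$; this makes $f$ algebraic over $\mathbb{C}(z)$, contradicting transcendence. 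The conclusion is then that the assumed failure cannot hold for all large $H$, which is exactly the ``infinitely many $H$'' statement (and, as the paper notes, Surroca shows this cannot be upgraded to all large $H$). Your sketch uses the transcendence of $f$ only to say $F \not\equiv 0$, which is too weak to close the argument; the contrapositive accumulation-of-zeros step is the missing idea.
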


\noindent The constant $C$ effectively depends on $\mathcal{U}$, $\mathcal{K}$ and $f$. It is also shown in the same paper that the theorem cannot be improved any further. That is, one cannot replace the ``infinitely many real $H \geq 1$" in the conclusion of the theorem with ``for all sufficiently large $H$".\\

\noindent Recall that the order and lower order of an entire function $f$ are defined as

\begin{equation*}\label{or1}
\rho = \limsup_{r \rightarrow \infty} \frac{\log \log M(r,f)}{\log r} \  \text{and} \ \lambda = \liminf_{r \rightarrow \infty} \frac{\log \log M(r,f)}{\log r} \ \text{respectively}.
\end{equation*}

\begin{rem}\label{rmm1}
	If $\rho$ is finite, then $\rho$ is the infimum of the set of all $\alpha$ such that $M(r,f) \leq e^{r^\alpha}$ for sufficiently large $r$ and $\lambda$ is the supremum of the set of all $\beta$ such that $e^{r^\beta} \leq M(r,f)$ for sufficiently large $r$.
\end{rem}

\noindent In \cite{BoJo2}, motivated by earlier work of Masser in \cite{DaMa}, Boxall and Jones studied the density of algebraic points of bounded height and degree on graphs of entire functions of finite order $\rho$ and positive lower order $\lambda$ restricted to compact subsets of $\mathbb{C}$. They attain a bound of the form $C(\log H)^\eta$ where the constant $C$ and the exponent $\eta$ are effective and $\eta$ depends only on $\rho$ and $\lambda$. More specifically, they prove the following theorem:\\

\begin{theorem}(Boxall and Jones, \cite{BoJo2})\\
Let $f$ be a nonconstant entire function of order $\rho$ and lower order $\lambda$. Suppose $0 < \lambda \leq \rho < \infty$ and let $d \geq 1$ and $r > 0$. There is a constant $C > 0$ such that for all $H > e$, there are at most $C (\log H)^{\eta (\lambda,\rho)}$ complex numbers $z$ such that $|z| \leq r$, $[\mathbb{Q}(z,f(z)): \mathbb{Q}] \leq d$ and $H(z,f(z)) \leq H$.

\end{theorem}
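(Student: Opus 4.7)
The plan is to adapt the interpolation determinant strategy Masser developed for the Riemann zeta function, now a standard tool for handling entire functions of finite positive order. Assume that the set
$$S = \{z \in \mathbb{C} : |z| \leq r,\ [\mathbb{Q}(z, f(z)):\mathbb{Q}] \leq d,\ H(z, f(z)) \leq H\}$$
has cardinality $N$, with points $(z_i, f(z_i))$ for $i = 1, \ldots, N$; the goal is to force $N \leq C(\log H)^{\eta(\lambda, \rho)}$ by deriving a contradiction when $N$ exceeds this quantity.

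I would fix parameters $A, B$, to be chosen later, and proceed either (i) by forming the interpolation determinant $\Delta = \det(z_i^{a_j} f(z_i)^{b_j})_{i,j}$ built from $N$ bi-indices $(a_j, b_j)$ with $a_j \leq A$ and $b_j \leq B$, or (ii) by applying Siegel's lemma to obtain a non-zero $P(X, Y) \in \mathbb{Z}[X, Y]$ of bidegree $(A, B)$ with $P(z_i, f(z_i)) = 0$ for every $i$. Either route requires $(A+1)(B+1)$ to exceed a suitable multiple of $dN$, the factor $d$ accounting for the passage from conditions over the algebraic fields $\mathbb{Q}(z_i, f(z_i))$ to conditions over $\mathbb{Z}$. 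Because $f$ has positive lower order it cannot be a polynomial, hence is transcendental; a suitable choice of monomials then ensures $\Delta \neq 0$ in the first route, and $F(z) := P(z, f(z)) \not\equiv 0$ in the second.

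Next I would derive the main estimate. In the polynomial route one applies Jensen's formula to $F$, giving
$$N \leq \frac{\log |F|_R - \log |c_k|}{\log(R/r)}$$
for any $R > r$, where $c_k$ is the first non-vanishing Taylor coefficient of $F$ at $0$. The numerator is controlled via $|F|_R \leq (A+1)(B+1) \|P\|_\infty R^A M(R, f)^B$ combined with the order estimate $\log M(R, f) \leq R^{\rho+\epsilon}$ valid for $R$ sufficiently large (Remark~\ref{rmm1}), while $\|P\|_\infty$ is bounded by Siegel's lemma in terms of $H, r, d, B$ and $M(r, f)$. The determinant route is analogous: one compares an analytic upper bound for $|\Delta|$, coming from a Vandermonde-type inequality, with an arithmetic Liouville-type lower bound obtained from the product formula.

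The remaining work is parameter optimisation. The upper order $\rho$ enters by controlling the growth term in $|F|_R$ and pushes $R$ downward, whereas the factor $\log(R/r)$ rewards choosing $R$ large. The lower order $\lambda$ restricts $R$ to a regime where $R^{\lambda-\epsilon} \leq \log M(R, f)$, and it is the interplay between these two scales that produces the exponent $\eta(\lambda, \rho)$; a reasonable first guess, based on setting $R^\lambda \sim \log H$ and $(A+1)(B+1) \sim dN$, is $\eta \sim 2\rho/\lambda$, which correctly reduces to the expected exponent $2$ when $\lambda = \rho$. I expect the main obstacle to be obtaining a sharp lower bound on $|c_k|$: since $f(0)$ need not be algebraic, Liouville cannot be invoked directly at $z = 0$. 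The standard resolution combines the Siegel construction (which forces at least one coefficient of $P$ to be not too small) with a Schwarz-lemma-type propagation to $|c_k|$, and the lower order $\lambda$ is exactly what makes this propagation quantitatively effective.
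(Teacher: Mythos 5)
Your skeleton (auxiliary polynomial with integer coefficients vanishing at all the points, then counting zeros of $F(z)=P(z,f(z))$ by a Jensen-type estimate, with the order $\rho$ controlling $M(R,F)$ and the lower order $\lambda$ entering the lower bound) is the same strategy the paper adapts from Boxall and Jones, and the construction step is exactly Masser's Proposition 2 (Lemma \ref{ml2} here) rather than a bare Siegel's lemma. The genuine gap is the step you yourself flag as the ``main obstacle'': the lower bound needed in the Jensen quotient. Your proposed fix --- that the Siegel construction forces a coefficient of $P$ to be not too small and a ``Schwarz-lemma-type propagation'' transfers this to the first Taylor coefficient $c_k$ of $F$ at $0$ --- does not work. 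Integrality of the coefficients of $P$ gives no control on $c_k$, because the Taylor coefficients of $F$ at $0$ are $\mathbb{Z}$-combinations of products of the (transcendental, arbitrary) Taylor coefficients of $f$, and massive cancellation is possible; and since $f(0)$ need not be algebraic there is no Liouville-type input at the origin to propagate. This is precisely the point where the argument must do something structurally different, and as proposed the proof cannot be completed.

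The resolution in the paper (following Boxall--Jones) is to abandon the origin and manufacture a point where $|F|\geq 1$ directly, using the lower order together with the Boutroux--Cartan lemma. Writing $k$ for the top degree of $P$ in $Y$, set $\tilde P(X,Y)=Y^kP(X,1/Y)$, $R(X)=\tilde P(X,0)$ (the coefficient polynomial of $Y^k$ in $P$) and $Q=\tilde P-R$. At a point $z$ with $|z|=r$ and $|f(z)|=M(r,f)$, Remark \ref{rmm1} gives $|f(z)|\geq e^{r^\beta}$ with $\beta$ a fixed fraction of $\lambda$, so $\bigl|Q\bigl(z,1/f(z)\bigr)\bigr|\leq \tfrac12$ once $r$ exceeds an explicit power of $\log H$; and since $R$ has degree at most $T$, the Boutroux--Cartan lemma guarantees that among roughly $T$ consecutive integer radii $r_i$ there is one with $|R(z_i)|>1$ at the maximum-modulus point $z_i$. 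Then $|F(z_i)|=|f(z_i)|^k\,\bigl|\tilde P\bigl(z_i,1/f(z_i)\bigr)\bigr|\geq \tfrac12 e^{k r_i^\beta}\geq 1$, and the Jensen/maximum-modulus count is run in a disc centred so as to use $|F(z_i)|\geq 1$ as the lower reference value, with $\log M(\cdot,F)$ bounded via the order $\rho$. This recentring trick is the key idea missing from your proposal; incidentally, it is also why the true exponent $\eta(\lambda,\rho)$ is more complicated than your guess $2\rho/\lambda$, since the radii at which the lower-order bound can be exploited themselves grow like a power of $\log H$ determined by both $\rho$ and $\lambda$.
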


\noindent The Boxall-Jones theorem immediately prompts two followup questions towards possible generalizations or improvements. On the one hand, one can ask if the same type of bound holds for meromorphic functions. Using Nevanlinna theory, we explored this theme in an upcoming paper currently under preparation. On the other hand, in which cases can the region to which $f$ was initially restricted be enlarged? In fact, more generally, for which functions can one drop the restriction to compact sets and actually count (possibly) \textit{all} points of bounded height and degree on the graph of $f$? In this paper, we explore the second theme for a specific class of entire functions of order less than one, following a question asked by Chris Miller and brought to our attention by Gareth Jones.

\subsection{A proposition of Masser}

A crucial part of our proof strategy involves ``converting" the question of counting algebraic points on the graph of the function $f$ to that of counting (or finding an upper bound for) the number of zeroes of a related function $g$, say, which is a considerably easier to handle task via analytic methods. This requires the construction (or existence) of a non-zero auxiliary polynomial $P(X,Y) \in \mathbb{Z}[X,Y]$ such that $P(z, f(z)) = 0$ whenever

\begin{equation*}
(z, f(z)) \in \overline{\mathbb{Q}}^2, \deg (z, f(z)) \leq d \ \text{and} \  H(z, f(z)) \leq H.
\end{equation*}

\noindent For our purposes, we use the auxiliary polynomial constructed by Masser in Proposition 2 of \cite{DaMa}. We give the details below.
	
	\begin{lem}\label{ml2} (Masser, \cite{DaMa}, Prop. 2)\\
		
\noindent	Let $d \geq 1$ and $T \geq \sqrt{8d}$ be positive integers and $A, Z, M$ and $H$ positive real numbers such that $ H \geq 1$. Let $f_1, f_2$ be functions analytic on an open neighbourhood of $\overline{B(0,2Z)}$, with $\max \{ |f_1(z)|, |f_2(z)| \} \leq M$ on this set. Suppose $\mathcal{Z} \subset \mathbb{C}$ is finite and satisfies the following for all $z, w \in \mathcal{Z}$:
		
		\begin{itemize}
			\item $|z| \leq Z$ ,
			\item $|w-z| \leq \frac{1}{A}$ ,
			\item $[\mathbb{Q}(f_1(z),f_2(z)): \mathbb{Q}] \leq d$,
			\item $H(f_1(z), f_2(z)) \leq H$.
		\end{itemize}
		
\noindent	Then there is a nonzero polynomial $P(X,Y)$ of total degree at most $T$ such that $P(f_1(z), f_2(z)) = 0$ for all $z \in \mathcal{Z}$ provided
		
		\begin{equation*}
		(AZ)^T > (4T)^\frac{96d^2}{T} (M+1)^{16d}H^{48d^2}.
		\end{equation*}
		
\noindent	Moreover, if $|\mathcal{Z}| \geq \frac{T^2}{8d}$, then $P(X,Y)$ can be chosen such that all the coefficients are integers each with absolute value at most
		
		\begin{equation*}
		2^{1/d}(T+1)^2H^T.
		\end{equation*}
		
	\end{lem}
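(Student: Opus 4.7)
The plan is a classical Thue-Siegel auxiliary polynomial construction coupled with a Schwarz lemma and a Liouville inequality. Set $N = \binom{T+2}{2} \geq T^2/2$, the dimension of the space of bivariate polynomials of total degree at most $T$, and $k = \lceil T^2/(8d) \rceil$; the hypothesis $T \geq \sqrt{8d}$ guarantees $k \geq 1$. Select a subset $\mathcal{Z}_0 \subseteq \mathcal{Z}$ of cardinality $\min(k, |\mathcal{Z}|)$. The target is to produce, via Siegel's lemma, a nonzero $P$ that vanishes on $\mathcal{Z}_0$, and then to use the analytic hypotheses to promote this to vanishing on all of $\mathcal{Z}$.

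Write $P(X, Y) = \sum_{i+j \leq T} c_{ij} X^i Y^j$ with integer unknowns $c_{ij}$. For each $z \in \mathcal{Z}_0$, the requirement $P(f_1(z), f_2(z)) = 0$ is a single equation over the number field $\mathbb{Q}(f_1(z), f_2(z))$ of degree at most $d$, and therefore splits into at most $d$ rational linear equations in the $c_{ij}$. The resulting system has at most $dk \leq T^2/8$ equations in $N > T^2/2$ unknowns, and its coefficients are controlled by the bound $H(f_1(z)^i f_2(z)^j) \leq H^{i+j} \leq H^T$. A suitable Siegel's lemma over number fields (Bombieri-Vaaler style) therefore yields a nonzero integer solution with $|c_{ij}| \leq 2^{1/d}(T+1)^2 H^T$, which establishes the coefficient bound in the ``moreover'' clause.

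Next set $g(z) := P(f_1(z), f_2(z))$, analytic on a neighbourhood of $\overline{B(0, 2Z)}$. The triangle inequality together with $|f_j| \leq M$ and the bound on $|c_{ij}|$ yields a sup-norm estimate $|g(z)| \leq G$ on this closed disk, where $G$ is polynomial in $T$ and $H$ and carries the factor $(M+1)^T$. Since $g$ vanishes on $\mathcal{Z}_0$ and each $z' \in \mathcal{Z}_0$ lies in $B(0, Z)$, the function $h(z) := g(z) / \prod_{z' \in \mathcal{Z}_0}(z - z')$ is analytic on $B(0, 2Z)$ and satisfies $|h(z)| \leq G Z^{-k}$ on the boundary; by the maximum principle this extends throughout the disk. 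For any $z \in \mathcal{Z}$, the diameter hypothesis gives $|z - z'| \leq 1/A$ for every $z' \in \mathcal{Z}_0$, and hence
\[ |g(z)| \leq G \, (AZ)^{-k}. \]
If $g(z) \neq 0$, it is a nonzero algebraic number of degree at most $d$ whose height can be bounded explicitly in terms of $T$ and $H$ (essentially a polynomial-in-$T$ multiple of $H^T$), and Liouville's inequality supplies a matching lower bound of the form $\tilde H^{-d}$. The hypothesis $(AZ)^T > (4T)^{96d^2/T}(M+1)^{16d}H^{48d^2}$ is calibrated precisely so that the Schwarz upper bound lies strictly below the Liouville lower bound, contradicting $g(z) \neq 0$ and forcing $g(z) = 0$ for every $z \in \mathcal{Z}$.

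The main technical obstacle is the exponent bookkeeping. The preceding steps naturally produce an inequality of the shape $(AZ)^k > (\text{polynomial in } T)\cdot (M+1)^T \cdot H^{T(1+d)}$; raising to the $T/k$-th power with $k = \lceil T^2/(8d)\rceil$ promotes the exponent of $M+1$ to essentially $8d$ and the exponent of $H$ to essentially $8d(1+d) = O(d^2)$, matching the constants $16d$ and $48d^2$ in the hypothesis, with the polynomial factors in $T$ being absorbed into the $(4T)^{96d^2/T}$ term. In the complementary case $|\mathcal{Z}| < T^2/(8d)$, the existence of some nonzero $P$ vanishing on $\mathcal{Z}$ is immediate from a dimension count over the appropriate number field, so the analytic step is only needed in the main range.
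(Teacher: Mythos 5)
This lemma is not proved in the paper at all: it is quoted verbatim as Proposition 2 of Masser's paper \cite{DaMa}, so there is no in-house argument to compare against. Your reconstruction is, in outline, exactly the route of the original source: a Thue--Siegel/Siegel-lemma construction of an integer-coefficient $P$ vanishing on a subset $\mathcal{Z}_0$ of size about $T^2/(8d)$ (this is precisely why that threshold appears in the ``moreover'' clause), followed by propagation of the vanishing to every remaining $z \in \mathcal{Z}$ by playing a Schwarz-type upper bound $|P(f_1(z),f_2(z))| \leq G\,(AZ)^{-|\mathcal{Z}_0|}$ (your factorization $h = g/\prod(z-z')$ and maximum principle step is correct, using $|w - z'| \geq Z$ on $|w| = 2Z$) against the Liouville lower bound $(2H(g(z)))^{-d}$ valid because each individual value lies in a field of degree at most $d$ --- the pointwise use of Liouville is essential, since a single large interpolation determinant would see the compositum of all the fields and lose control of the degree. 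Your handling of the complementary case $|\mathcal{Z}| < T^2/(8d)$ by counting the at most $d|\mathcal{Z}| < T^2/8$ rational linear conditions is also fine.

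The one genuinely thin spot is the quantitative half: the specific bound $2^{1/d}(T+1)^2H^T$ and the calibration $(4T)^{96d^2/T}(M+1)^{16d}H^{48d^2}$ are asserted (``bookkeeping'') rather than derived. In particular, splitting one equation over a degree-$d$ field into $d$ rational equations is not innocuous for sizes: passing to a $\mathbb{Q}$-basis can inflate the integer coefficients of the system well beyond $H^T$ (norms and denominators can contribute $H^{dT}$), so to land on an $H^T$-type bound uniformly in $d$ you need a Siegel lemma formulated over number fields in terms of the heights of the linear forms (Bombieri--Vaaler over the fields $\mathbb{Q}(f_1(z),f_2(z))$, or Masser's own variant), not the rational Siegel lemma applied to the expanded system. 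With that version the exponent $\frac{d|\mathcal{Z}_0|}{N - d|\mathcal{Z}_0|} \leq 1/3$ does leave ample slack, and your closing estimate $(AZ)^{T}$ versus roughly $(M+1)^{8d}H^{8d(1+2d)}$ shows the stated constants are generous, so the gap is one of verification rather than of ideas; still, a complete proof must carry out this computation, since the lemma's worth lies in those explicit constants.
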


\begin{rem}
When using this lemma, we will take $f_1(z) = z$ and $f_2(z) = f(z)$.
\end{rem}

%-------------------------------------------------------------------------------------------------------------------------------------------------------

%-------------------------------------------------------------------------------------------------------------------------------------------------------

\section{Preliminaries and auxiliary lemmas}

\subsection{The function $f$, and a brief discussion of the strategy}

Let $1 \leq z_1 \leq z_2 \leq \ldots$ be an increasing and unbounded sequence of positive real numbers such that $\sum_{n=1}^\infty \frac{1}{z_n} < \infty$. Then the infinite product

\begin{equation}\label{eqt}
f(z) := \prod_{n=1}^\infty \left( 1 - \frac{z}{z_n} \right)
\end{equation}

\noindent necessarily defines an entire function of order $\rho$ where $0 \leq \rho <1$.\\

\noindent Chris Miller asked for the density of algebraic points of height at most $H$ and degree at most $d$ on graphs of functions defined in this way. We note that when $f$ has positive lower order $\lambda$, then the Boxall-Jones theorem applies for restrictions of $f$ to sets of the form $\overline{B(0,r)}$ for $r>0$. The bound one gets in this case is of the form $C (\log H)^\eta$ where $C = C(r, f, d, \lambda, \rho)$ and $\eta = \eta (\lambda,\rho)$.\\% We shall concern ourselves with the subclass of these functions whose order (and therefore lower order) $\rho$ is such that $\rho \in (0, \frac{1}{2}]$.\\

\noindent However, as we will see shortly, functions of this form enjoy certain asymptotic approximations that give a more explicit and finer measure of growth than the one provided by just having positive lower order and finite order. Unfortunately, these approximations only hold outside of certain subsets of the graphs. In any case, taking advantage of such explicit growth characterizations, for appropriate subsets of the graphs, one can find the density of \textit{all} the algebraic points of bounded height and degree.\\

\noindent The strategy to do this utilizes a rather simple but crucial observation: Given an algebraic number $z$ of height at most $H$ and degree at most $d$, the modulus $|z|$ is bounded above by a function of $H$ and $d$. Therefore, to count the algebraic points of bounded height and degree on a function $f$, we can restrict our attention to those (algebraic) inputs $z$ for which $|f(z)|$ is not too large to have height at most $H$ or degree at most $d$. This is where an explicit lower approximation of $f$ becomes crucial because it gives us a handle on the growth of $|f|$.\\ 

\noindent The remainder of this subsection is devoted to making the contents of the previous two paragraphs explicit.\\

\noindent Let $0 < \phi < \frac{\pi}{2}$ and denote by $S_\phi$ the sector $S_\phi = \{ z \in \mathbb{C} : -\phi \leq \arg z \leq \phi  \}$. Then the sequence $\{ z_n \}_{n = 1}^\infty \subset S_\phi$. Let the sequence $\{ z_k \}_{k = 1}^\infty$ be such that $1 \leq z_1 \leq z_2 \leq \ldots$ and $\sum_{k = 1}^\infty \frac{1}{|z_k|^p} < \infty$, where $p$ is a non-negative integer. In Example 1 from (\cite{GoOs}, ~pp.66-69), Goldberg and Ostrowski were concerned with approximating the function 
 
\begin{equation*}
g(z) := \prod_{k = 1}^\infty E \left( \frac{z}{z_k} ,p\right)
\end{equation*}

\noindent where $p$ is a non-negative integer and

\[ E(z,p) := \begin{cases} 
      (1-z) & \text{if} \ p = 0 \\
      (1-z) \exp \left( z + \frac{z^2}{2} + \cdots + \frac{z^p}{p}  \right) & \text{otherwise}
   \end{cases}
\]

\noindent is the $p$th \textit{Weierstrass elementary factor}.\\

%The main objective of this section is to count the number of algebraic points on the graph of $f$ restricted to $\mathbb{C} \setminus S_\phi $.\\

%Fixing $\phi$ means that we miss the algebraic inputs of bounded height and degree that lie in $S_\phi$. In the next section we will see that our attempt to fix this by allowing $\phi$ to depend on other parameters introduces an obstruction which exposes a limitation of our method.\\

%One of the crucial ingredients of our argument is deduced from a rather more general example found in (\cite{GoOs}, ~pp.66-69).\\

%More specifically, given the sequence $\{ a_k \}_{k = 1}^\infty$ such that $1 \leq a_1 \leq a_2 \leq \ldots$ and $\sum_{k = 1}^\infty \frac{1}{|a_k|^p} < \infty$, Goldberg and Ostrowski were concerned with asymptotically approximating the function

\subsection*{Lemmas}

\noindent An asymptotic inequality approximating $\log g(z)$ in terms of the function $|z|^\rho$ and certain explicit coefficients was obtained, where $z \in \mathbb{C} \setminus S_\phi $ and $p \leq \rho \leq p+1$. The asymptotic inequality we need is thus a specialization of their result to the case where $p = 0$. We give the specific details in the next lemma.\\

\noindent Let $\{ z_n \}_{n = 1}^\infty$ be the sequence of zeros of $f$ as defined in Equation (\ref{eqt}) and denote by $n(r)$ the number of $z_n$ with modulus less than $r$. Let

\begin{equation*}
\mu := \lim_{r \rightarrow \infty } \frac{n(r)}{r^\rho}
\end{equation*} 
 
\noindent where $\rho \in (0,1) $ is the order of $f$. Assume also that $\lambda > 0$ where $\lambda$ is the lower order of $f$.

\begin{lem}(Corollary of Example 1, \cite{GoOs}, ~pp.66) \label{l1}\\
Let $0 < \epsilon <1$ and suppose $f$, $\mu$, $\rho$ and $\phi$ are as defined previously. Assume $0 < \mu < \infty$. Then there exists $r_1 (\epsilon )$ such that for all $z \in \mathbb{C}$ with $|z| > r_1(\epsilon)$ and $\phi < \arg z < 2 \pi - \phi$,

\begin{equation}\label{et66}
\left | \log f(z) - \frac{\mu \pi}{ \sin \pi \rho } e^{- i \pi \rho} z^\rho   \right | \leq \epsilon A r^\rho \csc \frac{\phi}{2},
\end{equation}

\noindent where $A = 6 + 3 \mu \pi \csc (\pi \rho)$.\\
\end{lem}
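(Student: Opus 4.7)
The plan is to start from a Stieltjes integral representation of $\log f$, extract the closed-form main term via contour integration, and bound the resulting error using the hypothesis $n(r) \sim \mu r^\rho$ together with a geometric estimate for $|t-z|$ when $z$ lies off the positive real axis.

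Since the zeros $z_n$ are positive reals with $\sum 1/z_n < \infty$, for $z\notin[0,\infty)$ one has $\log f(z)=\int_0^\infty \log(1-z/t)\,dn(t)$. Integration by parts (noting that $n(t)=O(t^\rho)$ with $\rho<1$ kills the boundary term at infinity, while $n\equiv 0$ on $[0,z_1)$ kills the one at the origin) gives
\begin{equation*}
\log f(z) \;=\; -z\int_0^\infty \frac{n(t)}{t(t-z)}\,dt.
\end{equation*}
Writing $n(t)=\mu t^\rho + (n(t)-\mu t^\rho)$ and evaluating the first piece by a keyhole contour around $[0,\infty)$ (with the branch $\arg z\in(0,2\pi)$) yields
\begin{equation*}
-\mu z\int_0^\infty \frac{t^{\rho-1}}{t-z}\,dt \;=\; \frac{\mu\pi}{\sin\pi\rho}\,e^{-i\pi\rho}z^\rho,
\end{equation*}
which is precisely the asserted main term. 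It remains to bound the error
\begin{equation*}
E(z) \;=\; -z\int_0^\infty \frac{n(t)-\mu t^\rho}{t(t-z)}\,dt.
\end{equation*}

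The geometric input for estimating $|E(z)|$ is the identity $|t-z|^2=(t-r)^2+4rt\sin^2(\theta/2)$; since $\sin(\theta/2)\geq\sin(\phi/2)$ on $\theta\in(\phi,2\pi-\phi)$, this gives the clean bound $|t-z|\geq\sin(\phi/2)(t+r)$. Given $\epsilon$, I would choose $r_0(\epsilon)$ so that $|n(t)-\mu t^\rho|\leq\epsilon t^\rho$ for $t\geq r_0$, and split the integral at $r_0$. On $[0,r_0]$ the numerator is bounded by a constant $K_\epsilon$, and the mild singularity at $t=0$ is harmless because $n\equiv 0$ on $[0,1)$ so the integrand behaves like $\mu t^{\rho-1}/(t+r)$ there; using $1/(t+r)\leq 1/r$, the contribution to $|E(z)|$ is $O_\epsilon(1)$, which is absorbed into $\epsilon r^\rho\csc(\phi/2)$ by taking $r_1(\epsilon)$ large enough. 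On $[r_0,\infty)$, bounding $|n(t)-\mu t^\rho|\leq\epsilon t^\rho$ and extending the range of integration back to $(0,\infty)$ gives
\begin{equation*}
\frac{r\epsilon}{\sin(\phi/2)}\int_0^\infty \frac{t^{\rho-1}}{t+r}\,dt \;=\; \frac{\epsilon\pi}{\sin\pi\rho}\,r^\rho\csc(\phi/2),
\end{equation*}
the integral being evaluated by the standard Beta-function formula.

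Adding the two contributions yields an error bound of the required shape $\epsilon A r^\rho\csc(\phi/2)$. The specific constant $A=6+3\mu\pi\csc(\pi\rho)$ quoted in the lemma is inherited from the Goldberg--Ostrowski derivation in the $p=0$ specialization; the sketch above with cruder bookkeeping produces $A$ of the form $1+\pi\csc(\pi\rho)$, and a slightly more careful accounting (or simply following Goldberg--Ostrowski with $p=0$) recovers the stated value. The main obstacle is administrative rather than conceptual: one must synchronize the two occurrences of $\epsilon$ (as the threshold controlling the asymptotic $n(t)=\mu t^\rho+o(t^\rho)$, and as the multiplicative prefactor in the final bound) and verify that the $O_\epsilon(1)$ tail from $[0,r_0]$ is genuinely negligible against $\epsilon r^\rho\csc(\phi/2)$ for $r\geq r_1(\epsilon)$; beyond this, the estimate is essentially a specialization of a standard Poisson--Jensen-type computation.
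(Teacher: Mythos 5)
The paper does not actually prove this lemma: it is quoted verbatim as a specialization (the case $p=0$) of Example 1 in Goldberg--Ostrovskii, pp.~66--69, so there is no in-paper argument to compare against. Your sketch is a correct self-contained reconstruction of the standard derivation behind that citation: the representation $\log f(z) = -z\int_0^\infty \frac{n(t)}{t(t-z)}\,dt$ (the boundary terms in the integration by parts do vanish since $n\equiv 0$ near $0$ and $n(t)=O(t^\rho)$ with $\rho<1$), the evaluation $-\mu z\int_0^\infty \frac{t^{\rho-1}}{t-z}\,dt = \frac{\mu\pi}{\sin\pi\rho}(-z)^\rho = \frac{\mu\pi}{\sin\pi\rho}e^{-i\pi\rho}z^\rho$ on the branch $\arg z\in(0,2\pi)$, and the error estimate via $|t-z|^2=(t-r)^2+4rt\sin^2(\theta/2)\geq \sin^2(\phi/2)(t+r)^2$ together with $n(t)=\mu t^\rho+o(t^\rho)$ and $\int_0^\infty \frac{t^{\rho-1}}{t+r}\,dt=\frac{\pi}{\sin\pi\rho}r^{\rho-1}$ are all sound. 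The only point to make explicit is the one you already flag: your raw bookkeeping yields an error of roughly $\epsilon'\pi\csc(\pi\rho)\,r^\rho\csc(\phi/2)+O_{\epsilon'}(1)\csc(\phi/2)$, and for small $\mu$ the constant $\pi\csc(\pi\rho)$ can exceed the stated $A=6+3\mu\pi\csc(\pi\rho)$; this is harmless, because $\epsilon$ is a free parameter and $r_1$ is allowed to depend on it, so one simply runs the argument with the threshold $\epsilon'$ chosen as a suitable multiple of $\epsilon$ (e.g.\ $\epsilon'\leq \epsilon A\sin(\pi\rho)/(2\pi)$) and then takes $r_1(\epsilon)$ large enough to absorb the $O_{\epsilon'}(1)$ term; note that the $\csc(\phi/2)$ factors out of both contributions, so $r_1$ stays independent of $\phi$, as the statement requires. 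With that rescaling spelled out, your argument proves the lemma with the quoted constant, and in substance it is the same computation Goldberg--Ostrovskii carry out for general genus $p$, specialized to $p=0$.
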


\noindent From the above lemma it follows that

\begin{equation*}
\left | \Re \left( \log f(z) - \frac{\mu \pi}{ \sin \pi \rho } e^{- i \pi \rho} z^\rho   \right)  \right | \leq \epsilon A r^\rho \csc \frac{\phi}{2}.
\end{equation*}

\noindent More explicitly, writing $z$ as $z = re^{i\theta}$, where $\phi < \theta < 2\pi - \phi$, we deduce from Inequality (\ref{et66}) that

\begin{equation}\label{eq2}
\left | \log |f(re^{i\theta})| - \frac{\mu \pi}{ \sin \pi \rho } \cos \rho(\theta - \pi) r^\rho   \right | \leq \epsilon A r^\rho \csc \frac{\phi}{2}.
\end{equation}

\noindent Assuming $\rho \in (0, \frac{1}{2}]$, we have on the one hand that  $\rho (\theta - \pi) \in \left( -\frac{\pi}{2} , \frac{\pi}{2}  \right)$. So $\cos \frac{1}{2} (\theta - \pi) \leq \cos \rho (\theta - \pi)$. On the other hand, $\cos \frac{1}{2} (\theta - \pi) = \sin \frac{\theta}{2}$. Therefore, from Inequality (\ref{eq2}), we have that:

\begin{equation*}
|f(re^{i\theta})| \geq e^{ C(\phi , \rho) r^\rho}, \ \ \text{where} \ \ C(\phi , \rho) =   \frac{\mu \pi \sin \frac{\phi}{2} }{ \sin \pi \rho } - \epsilon A \csc \frac{\phi}{2} .
\end{equation*}

\noindent Given $\phi$, $\epsilon$ can be chosen such that

\begin{equation*}
\epsilon = \min \left \{  \frac{\mu \pi \sin^2 \frac{\phi}{2}}{4A \sin \pi \rho }, \frac{1}{2}   \right \}, \text{say}.
\end{equation*}

\noindent In this case, we have that

\begin{equation*}
C (\phi , \rho) > \frac{\mu \pi \sin \frac{\phi}{2} }{2 \sin \pi \rho } > 0.
\end{equation*}

%The next lemma relates the modulus of an algebraic number $\alpha$ of degree $d$ to its height. It is well known that if $\alpha$ is a root of a polynomial $P(x) \in \mathbb{Z}[x]$ with \textit{naive} height $H_1$, then $|\alpha| \leq 1+ H_1$. We will use this fact to construct a radius bound $R_H$ such that if $|\alpha| \geq R_H$, then $|f(\alpha)|$ is too big to be an algebraic number of height less than $H$ and degree less than $d$.\\
%
%
%\begin{lem}
%Let $\alpha$ be an algebraic number of degree $d$ and exponential Weil height $H$. Then $|\alpha| \leq (2H)^d$.\\
%\end{lem}
%
%\begin{proof}
%\noindent The (exponential) Weil height of $\alpha$ is given by:
%
%\begin{equation*}
%H (\alpha) = \exp \left \{  \frac{\log \mathcal{M} (P) }{d}  \right \} = \mathcal{M}(P)^\frac{1}{d}
%\end{equation*}
%
% 
%\noindent where $\mathcal{M}(P)$ is the Mahler measure of the minimal polynomial $P$ of $\alpha$. On the other hand, if $P$ is a polynomial of degree $d$ and height $H_1$, then
%
%\begin{equation*}
%(d+1)^{- \frac{1}{2}} \mathcal{M} (P) \leq H_1 \leq 2^{d-1} \mathcal{M} (P).
%\end{equation*}
%
%\noindent Since $|\alpha| \leq 1+ H_1$, this implies that $|\alpha |\leq 1 + 2^{d-1} \mathcal{M}(P) = 1 + 2^{d-1} H^d \leq (2H)^d $.
%
%\end{proof}

\noindent The next lemma gives us bounds in terms of $H$ and $d$ of the modulus of an algebraic number of height at most $H$ and degree at most $d$. It is essentially a loose version of Liouville's inequality for absolute multiplicative height, which can be found in \cite{MiWa}, pp.82.

%\begin{lem}\label{liy}(Liouville inequality, \cite{MiWa})\\
%	Let $\alpha$ be a non-zero algebraic number with degree $d$. Then
%	
%	\begin{equation*}
%	|\alpha| \geq H(\alpha)^{-d}.
%	\end{equation*}
%	
%\end{lem}

\begin{lem}\label{lmtt}
Let $\alpha$ be a non-zero algebraic number of degree at most $d$ and height at most $H$. Then

\begin{equation}\label{eq3}
\frac{1}{(2H)^d} \leq |\alpha| \leq (2H)^d.
\end{equation}

\end{lem}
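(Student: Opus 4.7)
The plan is to prove the two inequalities by unpacking the definitions of Mahler measure and absolute multiplicative height and applying them to the minimal polynomial of $\alpha$. Let $k = [\mathbb{Q}(\alpha):\mathbb{Q}] \leq d$ and write the minimal polynomial of $\alpha$ over $\mathbb{Z}$ as $P(z) = a_k \prod_{j=1}^k (z - \alpha_j)$ with $\alpha_1 = \alpha$ and $a_k \in \mathbb{Z}_{\geq 1}$. By definition, $H(\alpha)^k = \mathcal{M}(\alpha) = a_k \prod_{j=1}^k \max\{1, |\alpha_j|\}$, and since $\alpha \neq 0$, the constant term $a_0 = (-1)^k a_k \prod_{j=1}^k \alpha_j$ is a nonzero integer, so $a_k \prod_{j=1}^k |\alpha_j| \geq 1$.

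For the upper bound, I would observe that $|\alpha| \leq \max\{1,|\alpha|\}$ is one of the factors in the Mahler measure product, while all the remaining factors $a_k$ and $\max\{1,|\alpha_j|\}$ for $j \geq 2$ are at least $1$. This gives
\begin{equation*}
|\alpha| \leq \max\{1,|\alpha|\} \leq a_k \prod_{j=1}^k \max\{1, |\alpha_j|\} = H(\alpha)^k \leq H^d \leq (2H)^d,
\end{equation*}
using $H \geq 1$ and $k \leq d$ for the last two inequalities.

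For the lower bound, I would use the estimate $a_k \prod_{j=1}^k |\alpha_j| \geq 1$ to solve for $|\alpha| = |\alpha_1|$, yielding
\begin{equation*}
|\alpha| \geq \frac{1}{a_k \prod_{j=2}^k |\alpha_j|} \geq \frac{1}{a_k \prod_{j=2}^k \max\{1, |\alpha_j|\}} \geq \frac{1}{H(\alpha)^k} \geq \frac{1}{H^d} \geq \frac{1}{(2H)^d},
\end{equation*}
where the middle step again absorbs the missing $\max\{1,|\alpha|\}$ factor (which is $\geq 1$) into the denominator to complete the Mahler measure product.

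The proof is essentially a bookkeeping exercise with the definitions; there is no real obstacle. The factor of $2$ in the statement is deliberate slack that makes the inequality easier to quote in later estimates without worrying about whether $H(\alpha)$ equals $H$ exactly or whether the degree is $d$ or smaller.
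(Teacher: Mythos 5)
Your argument is correct, and it is in fact more self-contained than what the paper does: the paper offers no proof of this lemma at all, simply describing it as ``a loose version of Liouville's inequality'' and citing Waldschmidt (\cite{MiWa}, p.~82). Your direct computation from the definitions --- writing the minimal polynomial as $a_k\prod_{j=1}^k(z-\alpha_j)$, noting $a_k\prod_j|\alpha_j| = |a_0| \geq 1$ since $\alpha \neq 0$, and bounding the remaining factors by the Mahler measure --- is exactly the content of the cited inequality, and every step checks out: the upper bound uses $|\alpha|\leq\max\{1,|\alpha|\}\leq\mathcal{M}(\alpha)=H(\alpha)^k$, and the lower bound uses the integrality of the constant term; both then use $H(\alpha)\leq H$, $H(\alpha)\geq 1$ and $k\leq d$. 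Worth noting: your argument actually delivers the sharper statement $H^{-d}\leq|\alpha|\leq H^d$, so the factor $2$ in the lemma is, as you say, pure slack (in the standard Liouville inequality the $2$ typically arises from working with the length or house of $\alpha$ rather than the Mahler measure); since your bound is stronger, the lemma as stated follows immediately.
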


\noindent Using the above lemma we prove the following:

\begin{lem}
Let $d \geq 1$ and $H \geq e^e$. Let $z = re^{i \theta} \in \mathbb{C}$ such that $\deg (z) \leq d$ and $H(z) \leq H$. Define the constant $K(\phi , \rho , d)$ by

\begin{equation*}
K(\phi , \rho ,d) = \left( \frac{ 2(d+1) }{C(\phi , \rho)}   \right)^\frac{1}{\rho}.
\end{equation*}

If $r \geq K(\phi , \rho ,d) (\log H)^\frac{1}{\rho} = R_H$, then $e^{ C(\phi , \rho) r^\rho} \geq (2H)^{d+1}$.\\

Hence, for $r \geq \max \{ r_1(\epsilon) , R_H  \}$, we have the following chain of inequalities:

\begin{equation*}
|f(re^{i\theta})| \geq e^{ C(\phi , \rho) r^\rho} \geq (2H)^{d+1}.
\end{equation*}
\end{lem}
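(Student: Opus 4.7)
The plan is to prove the first inequality by a direct logarithmic manipulation and then combine it with the lower bound on $|f(re^{i\theta})|$ already derived from Lemma \ref{l1}.

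First I would take logarithms of the desired inequality $e^{C(\phi,\rho) r^\rho} \geq (2H)^{d+1}$. This is equivalent to
\begin{equation*}
C(\phi,\rho) r^\rho \geq (d+1) \log(2H) = (d+1)(\log 2 + \log H).
\end{equation*}
Since $H \geq e^e$ forces $\log H \geq e > \log 2$, we can bound $\log(2H) \leq 2\log H$, so it suffices to establish
\begin{equation*}
C(\phi,\rho) r^\rho \geq 2(d+1)\log H,
\end{equation*}
i.e. $r^\rho \geq \frac{2(d+1)}{C(\phi,\rho)} \log H$. Raising both sides to the power $1/\rho$ gives exactly $r \geq K(\phi,\rho,d)(\log H)^{1/\rho} = R_H$, so the first claim is immediate from the hypothesis $r \geq R_H$.

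For the chain of inequalities, I would simply concatenate: when $r \geq r_1(\epsilon)$ and $\phi < \theta < 2\pi - \phi$, the lower bound $|f(re^{i\theta})| \geq e^{C(\phi,\rho) r^\rho}$ was already established from Inequality (\ref{eq2}) together with the explicit choice of $\epsilon$ giving $C(\phi,\rho) > \frac{\mu \pi \sin(\phi/2)}{2\sin \pi \rho} > 0$; imposing additionally $r \geq R_H$ then yields $e^{C(\phi,\rho)r^\rho} \geq (2H)^{d+1}$ by the first part. Taking $r \geq \max\{r_1(\epsilon), R_H\}$ makes both conditions hold simultaneously, which produces the claimed chain.

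There is no real obstacle here; the statement is essentially the definition of $K(\phi,\rho,d)$, with the only subtlety being the harmless absorption of the constant $\log 2$ into $\log H$ using $H \geq e^e$. (If one preferred to avoid this, one could equivalently enlarge $K$ by a factor of $2^{1/\rho}$; the choice in the lemma statement exploits the hypothesis $H \geq e^e$ to keep the constant clean.)
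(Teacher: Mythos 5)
Your proposal is correct and follows essentially the same route as the paper: reduce the exponential inequality to $C(\phi,\rho)r^\rho \geq (d+1)\log(2H)$, absorb $\log 2$ via $\log(2H) \leq 2\log H$, unwind the definition of $K(\phi,\rho,d)$, and then concatenate with the previously established lower bound $|f(re^{i\theta})| \geq e^{C(\phi,\rho)r^\rho}$ valid for $r \geq r_1(\epsilon)$. The only difference is that you spell out the $H \geq e^e$ justification for the absorption step, which the paper leaves implicit.
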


\noindent By Lemma \ref{lmtt} this implies that either $H(f(z)) > H$ or $\deg (f(z)) > d$.

\begin{proof}

\noindent We note that $e^{ C(\phi , \rho) r^\rho} \geq (2H)^{d+1}$ if

\begin{equation*}
C(\phi , \rho) r^\rho \geq (d+1) \log (2H).
\end{equation*}

\noindent The above inequality follows if

\begin{equation*}
C(\phi , \rho) r^\rho \geq 2(d+1) \log H.
\end{equation*}

\noindent And this is true if

\begin{equation*}
r \geq K(\phi , \rho ,d) (\log H)^\frac{1}{\rho}.
\end{equation*}

\noindent Recalling that $|f(re^{i\theta})| \geq e^{ C(\phi , \rho) r^\rho}$ when $r \geq r(\epsilon)$, if $r \geq \max \{ r_1(\epsilon) , R_H  \}$, we obtain the desired chain of inequalities.

\end{proof}

\noindent The next lemma gives a quantitative way of covering the zeroes of a polynomial $P(z)$ with a collection of disks outside of which $|P(z)| > 1$.

\begin{lem}(Boutroux-Cartan)\label{l54}\\
Let $P(z) \in \mathbb{C}[z]$ be a monic polynomial with degree $n \geq 1$. Then $|P(z)| > 1$ for all complex $z$ outside a collection of at most $n$ disks the sum of whose radii is $2e$.
\end{lem}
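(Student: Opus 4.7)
The plan is to construct the covering disks by a greedy procedure on the roots of $P$. Writing $P(z) = \prod_{j=1}^n (z-\alpha_j)$ where $\alpha_1,\ldots,\alpha_n$ are the zeros of $P$ counted with multiplicity, the goal is to cover the compact set $E = \{z \in \mathbb{C} : |P(z)| \leq 1\}$ by at most $n$ closed disks whose radii sum to at most $2e$.

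First, I would set up the greedy construction as follows: at each stage, among the as-yet-unenclosed roots of $P$, find the largest integer $k$ such that some closed disk of radius $k$ contains at least $k$ of those roots, select such a disk, enclose the roots it contains, and iterate. Writing $k_1 \geq k_2 \geq \cdots \geq k_s \geq 1$ for the successive values, the bound $\sum_i k_i \leq n$ is automatic since each root is enclosed at most once, so the procedure produces at most $n$ disks.

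Next, I would enlarge each chosen disk by a carefully chosen universal factor, and verify the decisive inequality: outside the enlarged disks, $|P(z)| > 1$. The key input is the maximality of $k$ at each stage, which guarantees that any disk of radius strictly exceeding the one selected encloses strictly fewer than that many roots. This forces the not-yet-enclosed roots to be sufficiently spread out. Combined with the factorisation $|P(z)| = \prod_j |z-\alpha_j|$ and the triangle inequality, this yields the required lower bound on $|P(z)|$ for $z$ in the complement of the covering.

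The main obstacle is the delicate radius accounting: the naive bound gives only $\sum_i k_i \leq n$, which is much larger than $2e$. Replacing $n$ by $2e$ requires an enlargement factor chosen so that the classical inequality $(1+1/m)^m < e$ can be invoked to control the cumulation across stages, effectively turning the sum into a bounded geometric-type series. I expect this constant-tracking to be the only real subtlety; once it is in place, the existence of the greedy construction and the verification of $|P|>1$ outside the enlarged disks are routine applications of the triangle inequality and the maximality property of each step.
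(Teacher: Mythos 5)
The paper itself offers no proof of this statement: it is quoted as the classical Boutroux--Cartan lemma, so your proposal can only be measured against the standard (Cartan) argument, which is indeed the one you are gesturing at. As it stands, though, the proposal has a genuine gap in both of its halves. First, the scaling of the greedy disks is wrong: you select, at each stage, a disk of integer radius $k$ containing at least $k$ uncovered zeros. With that normalisation the radii you have chosen already sum to as much as $n$, and no after-the-fact ``enlargement factor'' can bring the total down to $2e$ --- enlargement only makes disks bigger. In Cartan's construction the scale $H/n$ (here $H=e$) is built in from the start: one takes the largest integer $k$ such that some disk of radius $kH/n$ contains at least $k$ of the remaining zeros, and at the end each selected disk is simply doubled. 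Then $\sum_i k_i \le n$ gives total radius at most $2H = 2e$ directly; the constant $e$ does not enter through the covering at all, but through the lower bound on $|P|$.

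Second, the ``decisive inequality'' is precisely the part you do not prove, and it is the entire content of the lemma. What has to be shown is: for $z$ outside the doubled disks, if one orders the zeros by distance from $z$, then the $m$-th closest zero satisfies $|z-\alpha_{(m)}| > mH/n$ for every $m=1,\dots,n$. This is not a routine triangle-inequality remark; it requires examining the stage of the greedy process at which the offending zeros were covered and deriving a contradiction with the maximality of the rank selected at that stage, using that $z$ lies outside the \emph{doubled} disk chosen there. Once that distance estimate is in hand, one concludes
\begin{equation*}
|P(z)| \;=\; \prod_{m=1}^{n} |z-\alpha_{(m)}| \;>\; \prod_{m=1}^{n} \frac{mH}{n} \;=\; n!\left(\frac{H}{n}\right)^{n} \;>\; \left(\frac{H}{e}\right)^{n} \;=\; 1 \quad (H=e),
\end{equation*}
where the last step uses $n! > (n/e)^n$ (which is where your inequality $(1+1/m)^m < e$ enters, via induction). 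Your sketch replaces this chain by the expectation that ``constant-tracking'' will work out and that a cleverly chosen enlargement factor will turn the radius sum into a bounded series; neither reflects how the argument actually closes, so the proposal as written does not yet constitute a proof.
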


\noindent In the following lemma, the function $n(r,\frac{1}{f})$ represents the number of zeroes of $f$ in $\overline{B(0,r)}$. This is a standard Nevanlinna theoretic notation.

\begin{lem}(A corollary of Jensen's formula)\\
Let $G$ be a nonconstant entire function such that $G(0) \neq 0$. Let $0 < r < R < \infty$. Then:

\begin{equation*}
n(r,\frac{1}{G}) \leq \frac{1}{\log \frac{R}{r}} \log \left(  \frac{M(R,G)}{|G(0)|}  \right)
\end{equation*}

\end{lem}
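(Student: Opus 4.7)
The plan is to derive this as a direct consequence of Jensen's formula applied to the disk of radius $R$. Jensen's formula, since $G(0)\neq 0$, gives
\begin{equation*}
\log |G(0)| + \sum_{|a_k|<R} \log \frac{R}{|a_k|} = \frac{1}{2\pi}\int_0^{2\pi} \log |G(Re^{i\theta})|\, d\theta,
\end{equation*}
where the $a_k$ run over the zeros of $G$ in the open disk of radius $R$, counted with multiplicity.

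Next I would bound the right-hand side from above by $\log M(R,G)$, since $|G(Re^{i\theta})|\leq M(R,G)$ pointwise. Rearranging,
\begin{equation*}
\sum_{|a_k|<R} \log \frac{R}{|a_k|} \leq \log \frac{M(R,G)}{|G(0)|}.
\end{equation*}

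Now I would discard all terms in the sum corresponding to zeros with $r<|a_k|<R$, which are nonnegative and therefore only help the inequality, and restrict to the zeros with $|a_k|\leq r$. For each such zero, $\log(R/|a_k|)\geq \log(R/r)>0$, since $|a_k|\leq r<R$. Hence
\begin{equation*}
n\!\left(r,\tfrac{1}{G}\right)\log \frac{R}{r} \;\leq\; \sum_{|a_k|\leq r}\log \frac{R}{|a_k|} \;\leq\; \log \frac{M(R,G)}{|G(0)|},
\end{equation*}
and dividing by the positive quantity $\log(R/r)$ yields the stated bound. The only subtlety to check is the convention on whether $n(r,1/G)$ counts zeros in the closed or open disk of radius $r$; since $r<R$ strictly, either convention is harmless because any zero on $|z|=r$ still satisfies $|a_k|<R$ and contributes a term at least $\log(R/r)$ to the sum. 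There is no real obstacle here — this is a textbook deduction from Jensen's formula.
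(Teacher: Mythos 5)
Your derivation is correct and is exactly the standard argument the paper has in mind: the paper states this lemma without proof, simply labelling it a corollary of Jensen's formula, and your deduction (bound the Jensen integral by $\log M(R,G)$, drop the zeros with modulus between $r$ and $R$, and use $\log(R/|a_k|)\geq\log(R/r)$ for the rest) is precisely that corollary. The remark about the closed-disk convention for $n(r,\frac{1}{G})$ is handled correctly, since any zero with $|a_k|=r<R$ still contributes at least $\log(R/r)$ to the Jensen sum.
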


\section{Main Result}

\noindent We can now state and prove the main result of this section. Since $f$ is an entire function of positive lower order and finite order, our argument is an adaptation of that of Boxall and Jones in \cite{BoJo2}.\\

\begin{theorem}
Let $f (z) = \prod_{n = 1}^\infty \left( 1 - \frac{z}{z_n} \right)$ where $1 \leq z_1 \leq z_2 \leq \ldots$ and $\sum_{n=1}^\infty \frac{1}{z_n} < \infty$. Suppose the lower order $\lambda$ and order $\rho$ of $f$ are such that $0 < \lambda \leq \rho \leq \frac{1}{2}$. Let $0 < \phi < \frac{\pi}{2}$. Let $d, \alpha, \beta, \gamma$ be as follows: $d \geq 1$, $ \alpha = 1 + \rho$, $\beta = \frac{\lambda}{2}$, and  $\gamma = \frac{2\alpha + \rho}{\beta \rho}$. Then there is a constant $C > 0$ such that for all $H>e$, there are at most $C (\log H)^{\frac{2\alpha (\gamma + 1)}{\rho}}$ numbers $z \in \mathbb{C} \setminus S_\phi$ such that $[\mathbb{Q}(z, f(z)): \mathbb{Q}]\leq d$, $H(z, f(z)) \leq H$.
\end{theorem}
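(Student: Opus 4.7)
The plan is to adapt the Boxall--Jones method \cite{BoJo2}. First, the preceding lemma confines the algebraic points we wish to count to $B(0,R^*)\setminus S_\phi$ with $R^* = \max\{r_1(\epsilon),R_H\}\asymp (\log H)^{1/\rho}$: any such algebraic $z$ with $|z|\geq R^*$ has $|f(z)|\geq (2H)^{d+1}$, which contradicts Lemma~\ref{lmtt}. Let $N$ denote the number of algebraic points in this region; the goal is to bound $N$.

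Second, I apply Masser's lemma (Lemma~\ref{ml2}) with $A=1/(2R^*)$ (so that the pairwise-distance hypothesis is automatic) and $Z=R'$ a radius much larger than $R^*$ to be optimized, using $M\leq e^{(2R')^{\alpha}}$ from the fact that $f$ has order $\rho<\alpha=1+\rho$. The Masser inequality, after taking logarithms, reduces to
\[
T\log\frac{R'}{2R^*} \gtrsim d\,(R')^{\alpha}+d^{2}\log H,
\]
which fixes the scale of $T$ as a function of $R'$, $R^*$ and $H$. If $N<T^{2}/(8d)$, the theorem's bound is immediate; otherwise Masser produces a non-zero $P\in\mathbb{Z}[X,Y]$ of total degree $\leq T$ with integer coefficients of modulus $\leq 2^{1/d}(T+1)^{2}H^{T}$ vanishing at all the algebraic points in question. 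The entire function $g(z):=P(z,f(z))$ is non-zero (because $f$, being an entire function with infinitely many zeros but not a polynomial, is transcendental), and has all $N$ algebraic points among its zeros.

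Third, I bound the zeros of $g$ in $B(0,R^*)$ via the Jensen corollary stated above. The upper bound $\log M(R',g)\leq T\log H+T(R')^{\alpha}+O(T\log R')$ follows from the coefficient bound on $P$ and the order bound on $f$. For the lower-reference point, I use the hypothesis $\lambda>0$: Remark~\ref{rmm1} gives $M(r,f)\geq e^{r^{\beta}}$ with $\beta=\lambda/2$ for $r$ large, and the Goldberg--Ostrowski analysis already yields $|f(z)|\geq e^{C(\phi,\rho)|z|^{\rho}}$ outside $S_\phi$. Applying Boutroux--Cartan (Lemma~\ref{l54}) to the highest-$Y$-degree coefficient $P_{j_0}(X)\in\mathbb{Z}[X]$ of $P$, I locate a point $z_0\in\partial B(0,R')\setminus S_\phi$ outside the Boutroux--Cartan exceptional disks, at which $|P_{j_0}(z_0)|\geq 1$ and $|f(z_0)|$ is large enough that the leading term of $P(z_0,\cdot)$ dominates, so $|g(z_0)|\geq 1$. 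A translated form of the Jensen corollary then yields $N\leq n(R^*,1/g)\lesssim \log M(R',g)/\log(R'/R^*)$.

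Finally I balance parameters: with $R'\asymp (R^*)^{\gamma}\asymp (\log H)^{\gamma/\rho}$, so that $\log(R'/R^*)\asymp \log\log H$, the Masser constraint forces $T\asymp d(\log H)^{\alpha\gamma/\rho}/\log\log H$, and substituting into the Jensen bound gives $N\leq C(\log H)^{2\alpha(\gamma+1)/\rho}$. The hard part is the construction of the reference point $z_0$: it must simultaneously lie outside the sector $S_\phi$ (for the Goldberg--Ostrowski lower bound on $|f|$), outside the Boutroux--Cartan exceptional set of $P_{j_0}$ (for the lower bound on the leading coefficient of $P(X,\cdot)$), and on a circle at which $|f(z_0)|$ overwhelms the lower-order terms of $P(z_0,\cdot)$. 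These competing requirements are what force the precise value $\gamma=(2\alpha+\rho)/(\beta\rho)$, and the additional ``$+1$'' in the final exponent records the combined contribution of $T$ and $(R')^{\alpha}$ in the Jensen estimate.
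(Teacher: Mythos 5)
Your overall architecture matches the paper's (Goldberg--Ostrowski plus Liouville to confine the points to a disk of radius $\asymp(\log H)^{1/\rho}$, Masser's auxiliary polynomial, a Boutroux--Cartan reference point where the leading term dominates, then Jensen), but the parameter scheme in your second and third steps has a genuine gap: you couple the Masser radius to the reference radius by taking $Z=R'$, and this coupling is self-defeating. With $Z=R'$ you must take $M\geq e^{(2R')^{\alpha}}$, so your own Masser constraint forces $T\gtrsim (R')^{\alpha}/\log\bigl(R'/R^*\bigr)$. The dominant-term argument at a point $z_0$ with $|z_0|=R'$ then requires $|f(z_0)|$ to exceed roughly $(T+1)^{4}H^{T}(R')^{T}$, i.e.\ $\log|f(z_0)|\gtrsim T\log H\gtrsim (R')^{\alpha}\log H/\log\bigl(R'/R^*\bigr)$. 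But $f$ has order $\rho<\alpha=1+\rho$, so $\log M(R',f)=O\bigl((R')^{\rho+\epsilon}\bigr)$: neither the Goldberg--Ostrowski bound $e^{C(\phi,\rho)(R')^{\rho}}$ nor the lower-order bound $e^{(R')^{\beta}}$, nor any point of the circle $|z|=R'$ at all, can meet this requirement, for any choice of $R'$. So under your parameters no admissible reference point exists, and the claimed balancing that ``forces'' $\gamma=(2\alpha+\rho)/(\beta\rho)$ does not actually close.

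The paper avoids this by decoupling the two radii. Masser's lemma is applied with $Z\asymp(\log H)^{1/\rho}$, just large enough to contain the confined points, so that $M=e^{(2Z)^{\alpha}}$ and $T\asymp(\log H)^{2\alpha/\rho}$ is fixed independently of the reference radius; only afterwards is the reference radius $s\asymp(\log H)^{\gamma}$ chosen so large that $2^{1/d}(T+1)^{4}H^{T}s^{T}\leq\tfrac12 e^{s^{\beta}}$, and solving this inequality is precisely what produces $\gamma=(2\alpha+\rho)/(\beta\rho)$. Moreover the reference point is a maximum-modulus point of $f$ on a circle of radius $\approx s$ (Boutroux--Cartan applied to the constant term $R(X)$ of the reversed polynomial, over about $T$ consecutive integer radii, supplies one with $|R(z_i)|>1$); it need not lie outside $S_\phi$, and the sector lower bound plays no role at this stage --- it is used only to confine the algebraic points. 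Two smaller inaccuracies: the Jensen denominator $\log(R'/R^*)$ you invoke is not available when the reference point sits at distance $R'$ from the disk whose zeros are counted --- centering Jensen at $z_0$ one only gets a denominator of size $O(1)$, as in the paper's use of $\overline{B(0,R_H)}\subset\overline{B(z_i,s)}$ with outer radius $3s$ and denominator $\log 2$; and Boutroux--Cartan requires a monic polynomial, which must be handled by factoring out the nonzero integer leading coefficient. Neither of these is fatal, but the coupling of $Z$ to $R'$ is.
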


\begin{proof}
Let $H >e^e$. Throughout our proof the height bound $H$ is assumed to be sufficiently large. We shall denote by $C$ a positive constant independent of $H$. The constant $C$ may not be the same at each occurrence. Recall that $|P|$ denotes the modulus of the coefficient of the polynomial $P$ with largest absolute value.\\

\noindent We would first like to obtain a non-zero polynomial $P(X,Y) \in \mathbb{Z}[X,Y]$ of degree at most $T = C (\log H)^\frac{2\alpha}{\rho}$ such that $|P| \leq 2^\frac{1}{d} (T+1)^2 H^T$ and $P(z, f(z)) = 0$ whenever $[\mathbb{Q}(z, f(z)): \mathbb{Q}]\leq d$, $H(z, f(z)) \leq H$ and $z \not \in S_\phi$. To this end, let:

\begin{equation*}
A = \frac{1}{2R_H}, \ \ Z = C (\log H)^\frac{1}{\rho}, \ \ T = C (\log H)^\frac{2\alpha}{\rho} \ \ \text{and} \ \ M = e^{(2Z)^\alpha}.
\end{equation*}

\noindent We then have that $\max \{|z|, |f(z)| \} \leq M$ for all $z \in \overline{B(0, 2Z)}$.\\

\noindent Furthermore, we note that

\begin{equation*}
\log (AZ)^T  = C (\log H)^\frac{2\alpha}{\rho}  > C \left( \frac{\log \log H}{(\log H)^\frac{2\alpha}{\rho}} \right) + C (\log H)^\frac{\alpha}{\rho} + C\log H.
\end{equation*} 

\noindent Therefore

\begin{equation*}
(AZ)^T > (4T)^\frac{96d^2}{T} (M+1)^{16d}H^{48d^2}.
\end{equation*}
 
\noindent We note that the bound we are trying to prove is worse than $C (\log H)^\frac{4\alpha}{\rho}$. We can thus assume that there are at least $\frac{T^2}{8d}$ complex numbers such that $[\mathbb{Q}(z, f(z)): \mathbb{Q}]\leq d$ and $H(z, f(z)) \leq H$. By Lemma \ref{ml2} there is a polynomial $P(X,Y)$ satisfying all our requirements.\\

\noindent Let $G(z) = P(z, f(z))$. We would like to bound the number of zeroes of $G$ in $\overline{B(0, R_H)}$. To do this, first let $k$ be the highest power of $Y$ in $P(X,Y)$. We can assume $k \geq 1$. Let $\tilde{P}(X,Y) = Y^k P(X, \frac{1}{Y})$, $R(X) = \tilde{P}(X,0)$, and $Q(X,Y) = \tilde{P}(X,Y) - R(X)$. We note that $R(X)$ is not identically zero.\\

\noindent Let $\tilde{Q}(X,Y) = \frac{1}{Y}Q(X,Y)$. The highest power of $X$ in $\tilde{Q}$ is at most $T$ and $|\tilde{Q}| \leq |P| \leq 2^\frac{1}{d} (T+1)^2 H^T$. Finally, $\tilde{Q}$ has at most $(T+1)^2$ terms.\\

\noindent We would now like to find some $z_i \in \mathbb{C}$ such that $|G(z_i)| = |P(z_i, f(z_i))| \geq 1$. To this end, first we would like to find some sufficiently large radius $r$ such that if $|z| \geq r$, then $\left| Q \left(z, \frac{1}{f(z)}   \right) \right| \leq \frac{1}{2}$.\\

\noindent Let $z = re^{i \theta} \in \mathbb{C}$ be such that $|f(z)| = M(r, f) \geq 1$. Then:

\begin{equation*}
\left| \tilde{Q} \left(z, \frac{1}{f(z)}   \right)  \right| \leq 2^\frac{1}{d} (T+1)^4 H^T r^T.
\end{equation*}

\noindent Therefore

\begin{equation*}
\left| Q \left(z, \frac{1}{f(z)}   \right)  \right| \leq \frac{1}{2}
\end{equation*}

\noindent provided

\begin{equation*}
2^\frac{1}{d} (T+1)^4 H^T r^T \leq \frac{1}{2} M(r,f).
\end{equation*}

\noindent We note that (for a large enough $C$) if

\begin{equation*}
r \geq C (\log H)^{(2\alpha + \rho) / \beta \rho},
\end{equation*}

\noindent then

\begin{equation*}
2^\frac{1}{d} (T+1)^4 H^T r^T \leq \frac{1}{2} e^{r^\beta}
\end{equation*}
 
\noindent and, by Remark \ref{rmm1}

\begin{equation*}
e^{r^\beta} \leq M(r,f).
\end{equation*}

\noindent We thus get that

\begin{equation*}
\left| Q \left(z, \frac{1}{f(z)}   \right)  \right| \leq \frac{1}{2}
\end{equation*}

\noindent when 

\begin{equation*}
r \geq C (\log H)^\frac{(2\alpha + \rho)}{\beta \rho}.
\end{equation*}

\noindent Note that the degree of $R(X)$ is also at most $T$. For $i = 1, \ldots , [T] + 14$, say, let $r_i$ be the $i$th integer after $C (\log H)^\gamma$ where $\gamma := \frac{2\alpha + \rho}{\beta \rho}$. Let $z_i$ be such that $|z_i| = r_i$ and $|f(z_i)| = M(r_i,f)$. By Lemma \ref{l54}, there will be at least one $i$ such that $|R(z_i)|>1$. For such $i$, we have:

\begin{equation*}
\left| \tilde{P} \left(z_i, \frac{1}{f(z_i)}   \right)  \right| \geq \frac{1}{2}.
\end{equation*}

\noindent We can (again by Remark \ref{rmm1}) conclude that

\begin{equation*}
|G(z_i)| = |P(z_i, f(z_i))| = \left| f(z_i)^k \tilde{P} \left( z_i, \frac{1}{f(z_i)}  \right)  \right| \geq \frac{1}{2}e^{k{r_i}^\beta},
\end{equation*}

\noindent and therefore

\begin{equation*}
|G(z_i)| \geq 1.
\end{equation*}

\noindent Recall that $R_H$ is of the form $C (\log H)^\frac{1}{\rho}$ whilst on the other hand, \newline $r_i \leq C (\log H)^\gamma +T + 14$. So, $\overline{B(0, R_H)} \subset \overline{B(z_i, s)}$ where $s = C (\log H)^\gamma$.\\

\noindent By the maximum modulus principle and Lemma \ref{ml2}, we have that

\begin{equation*}
n(R_H, \frac{1}{G}) \leq \frac{1}{\log 2} \log \left( \frac{M(3s, G)}{|G(z_i)|}  \right) \leq \frac{\log M(3s, G)}{\log 2}.
\end{equation*}

\noindent By Remark \ref{rmm1}, we have that

\begin{equation*}
M(3s, G) \leq |P| (T+1)^2 (3s)^T e^{T(3s)^\alpha}.
\end{equation*}

\noindent Since $s = C (\log H)^\gamma$ and $T = C (\log H)^\frac{2\alpha}{\rho}$, we deduce that

\begin{equation*}
\log M(3s, G) \leq C (\log H)^{\frac{2\alpha (\gamma + 1)}{\rho}}.
\end{equation*}

%We also have that 
%
%\begin{align*}
%\alpha (1 + \gamma) & = (1 + \rho)(1 + \gamma)\\
%& = (1+\rho) \left( 1 + \frac{4 + 2\rho}{\lambda}  \right)\\
%& < \left( \frac{3\rho + 4}{\lambda} \right)^2.
%\end{align*}

\noindent Therefore:

\begin{equation*}
n(R_H, \frac{1}{G}) \leq C (\log H)^{\frac{2\alpha (\gamma + 1)}{\rho}}.
\end{equation*}

\noindent as required.\\

\noindent The constant $C$ effectively depends on $\mu, \lambda, \rho, \phi$ and $d$.

\end{proof}

\begin{center}
\textbf{\large{Acknowledgements}}
\end{center}

\noindent I would like to thank Gareth Boxall and Gareth Jones for bringing this question to my attention, as well as for the subsequent insightful comments and suggestions. The contents of this paper formed a chapter in the author's PhD thesis, which was completed under the guidance of Gareth Boxall.


\begin{thebibliography}{9}

\bibitem{EtBe1} 
Etienne Besson.   
\textit{Points rationnels de la fonction Gamma d'Euler.}
Archiv der Mathematik, 103(1):61-73, 2014.


\bibitem{BoJo2} 
Gareth Boxall and Gareth Jones.   
\textit{Rational values of entire functions of finite order.}
International Mathematics Research Notices, rnt052, 2015.


\bibitem{GoOs} 
Anatoly Goldberg and Iossif Ostrovskii.   
\textit{Value distribution of meromorphic functions.}
American Mathematical Society, 2008.



\bibitem{DaMa} 
David Masser.   
\textit{Rational values of the Riemann zeta function.}
Journal of Number Theory, 131(11):2037-2046, 2011.

\bibitem{AnSu} 
Andrea Surroca.   
\textit{Sur le nombre de points algébriques où une fonction analytique
transcendante prend des valeurs algébriques.}
Comptes Rendus Mathématiques. Académie des Sciences, Paris, 334(9):721-725, 2002.

\bibitem{MiWa} 
Michel Waldschmidt.   
\textit{Diophantine approximation on linear algebraic groups.}
Springer-Verlag, Berlin, 2000.




\end{thebibliography}
\end{document}